\documentclass{amsart}
\usepackage{amssymb}
\usepackage{amsfonts}

\setcounter{MaxMatrixCols}{10}

\newtheorem{theorem}{Theorem}
\theoremstyle{plain}

\newtheorem{corollary}{Corollary}

\newtheorem{lemma}{Lemma}

\numberwithin{equation}{section}
\input{tcilatex}

\begin{document}
\title[Simpson type inequalities]{Simpson type inequalities for functions
whose third derivatives in the absolute value are $s-$convex and $s-$concave}
\author{Merve Avci Ardic$^{\blacktriangledown \blacklozenge }$}
\address{$^{\blacktriangledown }$Ad\i yaman University, Faculty of Science
and Arts, Department of Mathematics, 02040, Ad\i yaman, Turkey.}
\email{merve.avci@atauni.edu.tr}
\thanks{$^{\blacklozenge }$corresponding author }
\author{M.E. Ozdemir$^{\bigstar }$}
\address{$^{\bigstar }$Atat\"{u}rk University, K.K. Education Faculty,
Department of Mathematics, 25240 Campus, Erzurum, Turkey}
\email{emos@atauni.edu.tr}
\keywords{Simpson inequality, $s-$convex function, $s-$concave function, H%
\"{o}lder inequality, Power-mean inequality.}

\begin{abstract}
In this paper, we established some new inequalities via $s-$convex and $s-$%
concave functions.
\end{abstract}

\maketitle

\section{introduction}

The following inequality is well known in the literature as Simpson's
inequality:%
\begin{eqnarray}
&&\left\vert \int_{a}^{b}f(x)dx-\frac{b-a}{3}\left[ \frac{f(a)+f(b)}{2}%
+2f\left( \frac{a+b}{2}\right) \right] \right\vert  \label{1.1} \\
&\leq &\frac{1}{2880}\left\Vert f^{(4)}\right\Vert _{\infty }\left(
b-a\right) ^{5},  \notag
\end{eqnarray}%
where the mapping $f:[a,b]\rightarrow 
\mathbb{R}
$ is assumed to be four times continuously differentiable on the interval
and $f^{(4)}$ to be bounded on $(a,b)$ , that is,%
\begin{equation*}
\left\Vert f^{(4)}\right\Vert _{\infty }=\sup_{t\in (a,b)}\left\vert
f^{(4)}(t)\right\vert <\infty .
\end{equation*}

For some results which generalize, improve and extend the inequality (\ref%
{1.1}), see the papers \cite{D}-\cite{U}.

In \cite{HM}, Hudzik and Maligranda considered among others the class of
functions which are $s-$convex in the second sense. This class is defined in
the following way: a function $f:%
\mathbb{R}
^{+}\rightarrow 
\mathbb{R}
$, where $%
\mathbb{R}
^{+}=[0,\infty ),$ is said to be s-convex in the second sense if%
\begin{equation*}
f(\alpha x+\beta y)\leq \alpha ^{s}f(x)+\beta ^{s}f(y)
\end{equation*}%
for all $x,y\in \lbrack 0,\infty ),$ $\alpha ,\beta \geq 0$ with $\alpha
+\beta =1$ and for some fixed $s\in (0,1].$ This class of $s-$convex
functions in the second sense is usually denoted by $K_{s}^{2}.$

It can be easily seen that for $s=1$, $s-$convexity reduces to ordinary
convexity of functions defined on $[0,1)$.

Some interesting and important inequalities for $s-$convex functions can be
found in \cite{HM}-\cite{AHO}.

In \cite{DF},Dragomir and Fitzpatrick proved a variant of Hadamard's
inequality which holds for s-convex functions in the second sense.

\begin{theorem}
\label{teo 1.0} Suppose that $f:[0,\infty )\rightarrow \lbrack 0,\infty )$
is an $s-$convex function in the second sense , where $s\in (0,1]$ and let $%
a,b\in \lbrack 0,\infty ),$ $a<b.$ If $f^{\prime }\in L^{1}[a,b],$ then the
following inequalities hold:%
\begin{equation}
2^{s-1}f\left( \frac{a+b}{2}\right) \leq \frac{1}{b-a}\int_{a}^{b}f(x)dx\leq 
\frac{f(a)+f(b)}{s+1}.  \label{1.2}
\end{equation}%
The constant $k=\frac{1}{s+1}$ is the best possible in the second inequality
in (\ref{1.2}). The above inequalities are sharp.
\end{theorem}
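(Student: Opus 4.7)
The plan is to prove the two inequalities separately by choosing convenient parametrizations of the interval $[a,b]$ that feed directly into the defining $s$-convexity inequality, and then integrating in the parameter $t\in[0,1]$.

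For the right-hand inequality, I would start with the substitution $x=ta+(1-t)b$, which maps $[0,1]$ bijectively onto $[a,b]$ with $dx=-(b-a)\,dt$. Applying $s$-convexity in the second sense with $\alpha=t$, $\beta=1-t$, $x=a$, $y=b$ gives
\[
f\bigl(ta+(1-t)b\bigr)\leq t^{s}f(a)+(1-t)^{s}f(b).
\]
Integrating both sides over $t\in[0,1]$ and using $\int_{0}^{1}t^{s}\,dt=\int_{0}^{1}(1-t)^{s}\,dt=\frac{1}{s+1}$, together with the change of variables above on the left, yields
\[
\frac{1}{b-a}\int_{a}^{b}f(x)\,dx\leq \frac{f(a)+f(b)}{s+1},
\]
which is exactly the right inequality in \eqref{1.2}.

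For the left-hand inequality, the trick is to write the midpoint as the average of the two symmetric points $ta+(1-t)b$ and $(1-t)a+tb$. Applying $s$-convexity with $\alpha=\beta=\tfrac12$ gives
\[
f\!\left(\frac{a+b}{2}\right)=f\!\left(\tfrac12\bigl(ta+(1-t)b\bigr)+\tfrac12\bigl((1-t)a+tb\bigr)\right)\leq \frac{1}{2^{s}}\Bigl[f\bigl(ta+(1-t)b\bigr)+f\bigl((1-t)a+tb\bigr)\Bigr].
\]
Integrating over $t\in[0,1]$ and observing that both terms on the right, after the substitution $x=ta+(1-t)b$ (respectively $x=(1-t)a+tb$), equal $\frac{1}{b-a}\int_{a}^{b}f(x)\,dx$, one obtains
\[
f\!\left(\frac{a+b}{2}\right)\leq \frac{2}{2^{s}}\cdot \frac{1}{b-a}\int_{a}^{b}f(x)\,dx,
\]
which after multiplication by $2^{s-1}$ is the desired left inequality.

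The only genuinely delicate point is the sharpness claim, in particular that $k=\frac{1}{s+1}$ cannot be replaced by a smaller constant. The natural test function is the extremal $s$-convex example $f(x)=x^{s}$ on $[0,1]$ (with $a=0$, $b=1$), for which $\frac{1}{b-a}\int_{a}^{b}f=\frac{1}{s+1}$ and $\frac{f(a)+f(b)}{s+1}=\frac{1}{s+1}$, so equality is attained. A similar limiting construction (or the piecewise function $f(x)=\mathbf{1}_{\{x>0\}}$ on $[0,1]$, which is $s$-convex in the second sense) shows that no constant smaller than $\frac{1}{s+1}$ can work, establishing optimality. This last verification, rather than the two integration arguments, is where I expect to spend the most care.
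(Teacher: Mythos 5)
The paper does not actually prove this theorem: it is quoted from Dragomir and Fitzpatrick \cite{DF} as a known result, and only the inequality (\ref{1.2}) is used later (in the proof of Theorem \ref{teo 2.4}). So there is no in-paper proof to compare against; judged on its own, your argument for the two inequalities is the standard Dragomir--Fitzpatrick proof and is correct: the substitution $x=ta+(1-t)b$ turns the pointwise bound $f(ta+(1-t)b)\leq t^{s}f(a)+(1-t)^{s}f(b)$ into the right-hand inequality after integration, and writing $\frac{a+b}{2}$ as the average of the symmetric points $ta+(1-t)b$ and $(1-t)a+tb$ gives the left-hand one. The example $f(x)=x^{s}$ on $[0,1]$ is indeed $s$-convex in the second sense (by subadditivity of $u\mapsto u^{s}$ for $s\in(0,1]$) and attains equality in the second inequality, which already settles optimality of $k=\frac{1}{s+1}$.

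One genuine error in the sharpness discussion: your parenthetical alternative $f=\mathbf{1}_{\{x>0\}}$ is \emph{not} $s$-convex in the second sense for $s<1$. Taking $x=1$, $y=0$, $\alpha=\beta=\frac{1}{2}$, the definition would require $1=f(\frac{1}{2})\leq (\frac{1}{2})^{s}f(1)+(\frac{1}{2})^{s}f(0)=2^{-s}<1$, which fails. The admissible Hudzik--Maligranda-type example jumps \emph{down} away from $0$ (e.g.\ $\mathbf{1}_{\{x=0\}}$), not up. Since the $x^{s}$ example already yields equality, this slip is harmless to the optimality claim, but the parenthetical should be deleted. You also do not address sharpness of the left-hand inequality, which the statement asserts; that claim is never used in the paper, so the omission is immaterial here, but it is not covered by your argument.
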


The main purpose of this paper is to establish some new inequalities for
functions whose third derivatives in the absolute value are $s-$convex and $%
s-$concave.

\section{inequalities for s-convex functions in the second sense}

To prove our new results we need the following lemma (see \cite{AS}).

\begin{lemma}
\label{lem 2.1} Let $f:I\rightarrow 
\mathbb{R}
$ be a function such that $f^{\prime \prime \prime \text{ }}$be absolutely
continuous on $I^{\circ }$, the interior of I. Assume that $a,b\in I^{\circ
},$ with $a<b$ and $f^{\prime \prime \prime \text{ }}\in L[a,b].$ Then, the
following equality holds,%
\begin{eqnarray*}
&&\int_{a}^{b}f(x)dx-\frac{b-a}{6}\left[ f(a)+4f\left( \frac{a+b}{2}\right)
+f(b)\right] \\
&=&\left( b-a\right) ^{4}\int_{0}^{1}p(t)f^{\prime \prime \prime
}(ta+(1-t)b)dt,
\end{eqnarray*}%
where%
\begin{equation*}
p(t)=\left\{ 
\begin{array}{c}
\frac{1}{6}t^{2}\left( t-\frac{1}{2}\right) ,\text{ \ \ \ }t\in \lbrack 0,%
\frac{1}{2}] \\ 
\\ 
\frac{1}{6}(t-1)^{2}\left( t-\frac{1}{2}\right) ,\text{ \ \ }t\in (\frac{1}{2%
},]\text{\ \ \ .\ \ \ \ }%
\end{array}%
\right.
\end{equation*}
\end{lemma}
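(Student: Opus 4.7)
The plan is to verify the identity by starting from the right-hand side and applying integration by parts three times on each piece of the split integral, since the kernel $p(t)$ has degree three (in the piece $[\tfrac12,1]$ after expansion, and even on $[0,\tfrac12]$) while the integrand involves $f'''$. I would first write
\begin{equation*}
\int_{0}^{1}p(t)f'''(ta+(1-t)b)\,dt = \int_{0}^{1/2}\tfrac{1}{6}t^{2}\bigl(t-\tfrac12\bigr)f'''(ta+(1-t)b)\,dt + \int_{1/2}^{1}\tfrac{1}{6}(t-1)^{2}\bigl(t-\tfrac12\bigr)f'''(ta+(1-t)b)\,dt,
\end{equation*}
and handle the two pieces separately, remembering that each integration by parts pulls out a factor of $\tfrac{1}{a-b} = -\tfrac{1}{b-a}$ from the chain rule, so the total prefactor after three integrations by parts will be $-\tfrac{1}{(b-a)^{3}}$, matching the $(b-a)^{4}$ on the outside to give a $-(b-a)$ factor.

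Next I would carry out the three integrations by parts on each piece. The key point is to choose the antiderivatives of $p(t)$, $p(t)$ after one integration, etc., so that the boundary contributions at the endpoints $t=0$, $t=\tfrac12$, and $t=1$ are exactly the quantities needed. Concretely, $p(0)=0$, $p(\tfrac12)=0$ (from both sides, so no jump at $\tfrac12$ at this order), and $p(1)=0$, so the first integration by parts loses no boundary term from $f''$. Differentiating once gives kernels that vanish at $0$ and $1$ but are nonzero at $\tfrac12$, producing a jump there; similarly the next derivative produces further boundary contributions. These boundary terms, accumulated across the three integrations by parts, should assemble into $f(a)$, $f(b)$, and $f\!\left(\tfrac{a+b}{2}\right)$ with the Simpson weights $\tfrac{1}{6}$, $\tfrac{1}{6}$, and $\tfrac{4}{6}$.

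The remaining step, after all boundary terms are collected, is the final integral of $f$ itself against a constant kernel on $[0,1]$, which becomes $\int_{0}^{1} f(ta+(1-t)b)\,dt$. Via the substitution $x=ta+(1-t)b$, $dx=(a-b)\,dt$, this equals $\tfrac{1}{b-a}\int_{a}^{b} f(x)\,dx$, producing the integral on the left after multiplication by $(b-a)^{4}$ and the accumulated prefactors.

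The main obstacle is purely bookkeeping: at each of the three integrations by parts I must (i) pick the right antiderivative constants so that the kernel vanishes at one of $0,\tfrac12,1$ (to kill unwanted boundary terms) and (ii) track the jump of the kernel at $\tfrac12$, since the two pieces of $p(t)$ have different polynomial forms and their successive antiderivatives will generically disagree there, contributing a term proportional to $f^{(k)}\!\left(\tfrac{a+b}{2}\right)$. I expect the $\tfrac{4}{6}$ coefficient of $f\!\left(\tfrac{a+b}{2}\right)$ to emerge precisely from the jumps at $t=\tfrac12$ after three integrations by parts, while $f(a)$ and $f(b)$ come from the endpoints $t=1$ and $t=0$ respectively. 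Once the signs and combinatorial factors are verified, the identity falls out directly.
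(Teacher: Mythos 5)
The paper gives no proof of this lemma at all --- it is simply quoted from \cite{AS} --- so there is no in-paper argument to compare against; your integration-by-parts plan is the standard route and it does work. Two bookkeeping corrections before you write it out. First, since you start from the right-hand side you are \emph{differentiating} the fixed kernel $p$ (and antidifferentiating $f'''$), so there are no antiderivative constants to choose: the relevant data are just the one-sided values of $p$, $p'$, $p''$ at $0$, $\tfrac12$, $1$. Second, $p'$ is in fact \emph{continuous} at $\tfrac12$ (both one-sided values equal $\tfrac1{24}$) and vanishes at $0$ and $1$, so --- contrary to your expectation --- the second integration by parts contributes nothing at $\tfrac12$; the two pieces cancel. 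All of Simpson's weights appear only at the third stage, from $p''(0)=-\tfrac16$, $p''(1)=\tfrac16$, $p''\bigl(\tfrac12^{-}\bigr)=\tfrac13$, $p''\bigl(\tfrac12^{+}\bigr)=-\tfrac13$ and $p'''\equiv 1$ on both pieces. Setting $g(t)=f(ta+(1-t)b)$, the three integrations by parts give
\[
\int_{0}^{1}p(t)\,g'''(t)\,dt=\tfrac16\,g(0)+\tfrac23\,g\bigl(\tfrac12\bigr)+\tfrac16\,g(1)-\int_{0}^{1}g(t)\,dt,
\]
and since $g'''(t)=(a-b)^{3}f'''(ta+(1-t)b)$, $g(0)=f(b)$, $g(1)=f(a)$, $g\bigl(\tfrac12\bigr)=f\bigl(\tfrac{a+b}{2}\bigr)$ and $\int_{0}^{1}g=\tfrac{1}{b-a}\int_{a}^{b}f(x)\,dx$, multiplying by $(b-a)^{4}/(a-b)^{3}=-(b-a)$ yields exactly the stated identity. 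So your skeleton is correct; just relocate the source of the coefficient $\tfrac46$ from a (nonexistent) jump of $p'$ to the jump of $p''$ at $t=\tfrac12$.
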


\begin{theorem}
\label{teo 2.1} Let $f:I\subset \lbrack 0,\infty )\rightarrow 
\mathbb{R}
$ be a differentiable function on $I^{\circ }$ such that $f^{\prime \prime
\prime \text{ }}\in L[a,b],$ where $a,b\in I^{\circ }$ with $a<b.$ If $%
\left\vert f^{\prime \prime \prime \text{ }}\right\vert $ is $s-$convex in
the second sense on $[a,b]$ and for some fixed $s\in (0,1],$ then 
\begin{eqnarray*}
&&\left\vert \int_{a}^{b}f(x)dx-\frac{b-a}{6}\left[ f(a)+4f\left( \frac{a+b}{%
2}\right) +f(b)\right] \right\vert \\
&\leq &\frac{\left( b-a\right) ^{4}}{6}\left[ \frac{%
2^{-4-s}((1+s)(2+s)+34+2^{4+s}(-2+s)+11s+s^{2})}{(1+s)(2+s)(3+s)(4+s)}\right]
\\
&&\times \left[ \left\vert f^{\prime \prime \prime \text{ }}(a)\right\vert
+\left\vert f^{\prime \prime \prime \text{ }}(b)\right\vert \right] .
\end{eqnarray*}
\end{theorem}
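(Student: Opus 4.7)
The plan is to apply Lemma \ref{lem 2.1} and then use the $s$-convexity of $|f'''|$ on the integrand pointwise. Starting from the identity
\[
\int_{a}^{b}f(x)dx-\frac{b-a}{6}\left[ f(a)+4f\!\left(\tfrac{a+b}{2}\right)+f(b)\right]
=\left( b-a\right)^{4}\!\int_{0}^{1}p(t)f^{\prime\prime\prime}(ta+(1-t)b)\,dt,
\]
I would take absolute values and pull them inside the integral via the triangle inequality.

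Next I would split the resulting integral as $\int_{0}^{1/2}+\int_{1/2}^{1}$ so that $|p(t)|$ admits a single closed form on each piece. Since $t-\frac{1}{2}\le 0$ on $[0,1/2]$ and $t-\frac{1}{2}\ge 0$ on $[1/2,1]$, we get $|p(t)|=\frac{1}{6}t^{2}(\frac{1}{2}-t)$ on the first piece and $|p(t)|=\frac{1}{6}(1-t)^{2}(t-\frac{1}{2})$ on the second. Then the $s$-convexity hypothesis gives
\[
\bigl|f^{\prime\prime\prime}(ta+(1-t)b)\bigr|\le t^{s}\bigl|f^{\prime\prime\prime}(a)\bigr|+(1-t)^{s}\bigl|f^{\prime\prime\prime}(b)\bigr|,
\]
and the problem reduces to evaluating four one-variable integrals of the form $\int_{0}^{1/2}$ or $\int_{1/2}^{1}$ of a cubic in $t$ times $t^{s}$ or $(1-t)^{s}$.

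A key simplification is the symmetry $t\mapsto 1-t$: this substitution carries $\int_{0}^{1/2}\frac{1}{6}t^{2}(\tfrac{1}{2}-t)\,t^{s}\,dt$ to $\int_{1/2}^{1}\frac{1}{6}(1-t)^{2}(t-\tfrac{1}{2})(1-t)^{s}\,dt$, and exchanges the other pair in the same way. Consequently the coefficients of $|f^{\prime\prime\prime}(a)|$ and $|f^{\prime\prime\prime}(b)|$ coincide, and it suffices to compute $I_{1}+I_{2}$ where $I_{1}=\int_{0}^{1/2}\frac{1}{6}t^{s+2}(\tfrac{1}{2}-t)\,dt$ and $I_{2}=\int_{0}^{1/2}\frac{1}{6}t^{2}(\tfrac{1}{2}-t)(1-t)^{s}\,dt$. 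The first is immediate, yielding $\frac{2^{-s-4}}{6(s+3)(s+4)}$. For the second I would expand $t^{2}(\tfrac{1}{2}-t)$, substitute $u=1-t$, and integrate the resulting polynomial in $u$ times $u^{s}$ term by term over $[\tfrac{1}{2},1]$; this produces a sum of four terms with denominators $s+1,s+2,s+3,s+4$.

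The main obstacle, and essentially the only nontrivial work, is the algebraic consolidation in the last step: one must put all terms over the common denominator $(1+s)(2+s)(3+s)(4+s)$ and verify that the numerator collapses to $2^{-4-s}\bigl((1+s)(2+s)+34+2^{4+s}(-2+s)+11s+s^{2}\bigr)$ as stated. This is straightforward but delicate bookkeeping, and carries the mixed $2^{-s-4}$ versus $2^{4+s}$ powers that reflect the boundary contributions at $t=\tfrac12$ and $t=1$ respectively. Once that identity is checked, multiplying by $[|f^{\prime\prime\prime}(a)|+|f^{\prime\prime\prime}(b)|]$ and by the prefactor $(b-a)^{4}/6$ yields exactly the stated bound.
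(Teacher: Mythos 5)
Your proposal is correct and follows essentially the same route as the paper's proof: apply Lemma \ref{lem 2.1}, take absolute values, split at $t=\tfrac12$, use $s$-convexity pointwise, and evaluate the resulting Beta-type integrals (the paper records the same $t\mapsto 1-t$ symmetry you exploit by simply stating the two pairs of integrals are equal, with the values $\frac{2^{-4-s}}{(3+s)(4+s)}$ and $\frac{2^{-4-s}(34+2^{4+s}(-2+s)+11s+s^{2})}{(1+s)(2+s)(3+s)(4+s)}$). No gaps.
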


\begin{proof}
From Lemma \ref{lem 2.1} and $s-$convexity of $\left\vert f^{\prime \prime
\prime \text{ }}\right\vert $ , we have%
\begin{eqnarray*}
&&\left\vert \int_{a}^{b}f(x)dx-\frac{b-a}{6}\left[ f(a)+4f\left( \frac{a+b}{%
2}\right) +f(b)\right] \right\vert \\
&\leq &\left( b-a\right) ^{4}\left\{ \int_{0}^{\frac{1}{2}}\left\vert \frac{1%
}{6}t^{2}\left( t-\frac{1}{2}\right) \right\vert \left\vert f^{\prime \prime
\prime }(ta+(1-t)b)\right\vert dt\right. \\
&&\left. +\int_{\frac{1}{2}}^{1}\left\vert \frac{1}{6}(t-1)^{2}\left( t-%
\frac{1}{2}\right) \right\vert \left\vert f^{\prime \prime \prime
}(ta+(1-t)b)\right\vert dt\right\} \\
&\leq &\frac{\left( b-a\right) ^{4}}{6}\left\{ \int_{0}^{\frac{1}{2}%
}t^{2}\left( \frac{1}{2}-t\right) \left( t^{s}\left\vert f^{\prime \prime
\prime \text{ }}(a)\right\vert +(1-t)^{s}\left\vert f^{\prime \prime \prime 
\text{ }}(b)\right\vert \right) dt\right. \\
&&\left. +\int_{\frac{1}{2}}^{1}(t-1)^{2}\left( t-\frac{1}{2}\right) \left(
t^{s}\left\vert f^{\prime \prime \prime \text{ }}(a)\right\vert
+(1-t)^{s}\left\vert f^{\prime \prime \prime \text{ }}(b)\right\vert \right)
dt\right\} \\
&=&\frac{\left( b-a\right) ^{4}}{6}\left[ \frac{%
2^{-4-s}((1+s)(2+s)+34+2^{4+s}(-2+s)+11s+s^{2})}{(1+s)(2+s)(3+s)(4+s)}\right]
\\
&&\times \left[ \left\vert f^{\prime \prime \prime \text{ }}(a)\right\vert
+\left\vert f^{\prime \prime \prime \text{ }}(b)\right\vert \right] ,
\end{eqnarray*}%
where we use the fact that 
\begin{equation*}
\int_{0}^{\frac{1}{2}}t^{2+s}\left( \frac{1}{2}-t\right) dt=\int_{\frac{1}{2}%
}^{1}(1-t)^{s+2}\left( t-\frac{1}{2}\right) =\frac{2^{-4-s}}{(3+s)(4+s)}
\end{equation*}%
and 
\begin{equation*}
\int_{0}^{\frac{1}{2}}t^{2}\left( \frac{1}{2}-t\right) (1-t)^{s}dt=\int_{%
\frac{1}{2}}^{1}(t-1)^{2}\left( t-\frac{1}{2}\right) t^{s}dt=\frac{%
2^{-4-s}\left( 34+2^{4+s}(-2+s)+11s+s^{2}\right) }{(1+s)(2+s)(3+s)(4+s)}.
\end{equation*}
\end{proof}

\begin{theorem}
\label{teo 2.2} Let $f:I\subset \lbrack 0,\infty )\rightarrow 
\mathbb{R}
$ be a differentiable function on $I^{\circ }$ such that $f^{\prime \prime
\prime \text{ }}\in L[a,b],$ where $a,b\in I^{\circ }$ with $a<b.$ If $%
\left\vert f^{\prime \prime \prime \text{ }}\right\vert ^{q}$ is $s-$convex
in the second sense on $[a,b]$ and for some fixed $s\in (0,1]$ and $q>1$
with $\frac{1}{p}+\frac{1}{q}=1,$ then the following inequality holds:%
\begin{eqnarray*}
&&\left\vert \int_{a}^{b}f(x)dx-\frac{b-a}{6}\left[ f(a)+4f\left( \frac{a+b}{%
2}\right) +f(b)\right] \right\vert \\
&\leq &\frac{\left( b-a\right) ^{4}}{48}\left( \frac{1}{2}\right) ^{\frac{1}{%
p}}\left( \frac{\Gamma (2p+1)\Gamma (p+1)}{\Gamma (3p+2)}\right) ^{\frac{1}{p%
}} \\
&&\times \left\{ \left[ \frac{1}{2^{s+1}(s+1)}\left\vert f^{\prime \prime
\prime \text{ }}(a)\right\vert ^{q}+\frac{2^{s+1}-1}{2^{s+1}(s+1)}\left\vert
f^{\prime \prime \prime \text{ }}(b)\right\vert ^{q}\right] ^{\frac{1}{q}%
}\right. \\
&&\left. +\left[ \frac{2^{s+1}-1}{2^{s+1}(s+1)}\left\vert f^{\prime \prime
\prime \text{ }}(a)\right\vert ^{q}+\frac{1}{2^{s+1}(s+1)}\left\vert
f^{\prime \prime \prime \text{ }}(b)\right\vert ^{q}\right] ^{\frac{1}{q}%
}\right\} .
\end{eqnarray*}
\end{theorem}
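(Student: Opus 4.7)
The plan is to combine Lemma \ref{lem 2.1} with Hölder's inequality applied separately on the two subintervals $[0,\tfrac12]$ and $[\tfrac12,1]$, then bound the resulting $|f'''|^q$-integrals via $s$-convexity. Concretely, I would start from the identity in Lemma \ref{lem 2.1} and apply the triangle inequality to get
\begin{equation*}
\left|\int_a^b f(x)dx - \frac{b-a}{6}\Big[f(a)+4f\!\left(\tfrac{a+b}{2}\right)+f(b)\Big]\right| \leq (b-a)^4\left\{I_1 + I_2\right\},
\end{equation*}
where $I_1 = \int_0^{1/2}\tfrac{1}{6}t^2(\tfrac12-t)\,|f'''(ta+(1-t)b)|\,dt$ and $I_2 = \int_{1/2}^1\tfrac{1}{6}(1-t)^2(t-\tfrac12)\,|f'''(ta+(1-t)b)|\,dt$.

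Next I would apply Hölder's inequality with exponents $p$ and $q$ to each of $I_1,I_2$. This produces two factors. The first factor is a pure $p$-integral of $|p(t)|$; for $I_1$ it is
\begin{equation*}
\left(\int_0^{1/2}\tfrac{1}{6^p}t^{2p}(\tfrac12-t)^p\,dt\right)^{1/p},
\end{equation*}
and for $I_2$ the symmetric integral after the substitution $u=1-t$ yields exactly the same value. I would evaluate this integral by the change of variables $t = u/2$, which converts it into $\tfrac{1}{2^{3p+1}}B(2p+1,p+1) = \tfrac{1}{2^{3p+1}}\tfrac{\Gamma(2p+1)\Gamma(p+1)}{\Gamma(3p+2)}$. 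Taking the $1/p$-th power and pulling out the $\tfrac16$ gives the common constant $\tfrac{1}{6}\cdot \tfrac{1}{2^{3+1/p}}\bigl(\tfrac{\Gamma(2p+1)\Gamma(p+1)}{\Gamma(3p+2)}\bigr)^{1/p}$, which combines with the outer $(b-a)^4$ and the two contributions to produce the prefactor $\tfrac{(b-a)^4}{48}(\tfrac12)^{1/p}\bigl(\tfrac{\Gamma(2p+1)\Gamma(p+1)}{\Gamma(3p+2)}\bigr)^{1/p}$ in the statement.

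The second factor in each application of Hölder is where $s$-convexity enters: using $|f'''(ta+(1-t)b)|^q \le t^s|f'''(a)|^q + (1-t)^s|f'''(b)|^q$, I would compute the elementary integrals
\begin{equation*}
\int_0^{1/2}t^s\,dt = \frac{1}{2^{s+1}(s+1)}, \qquad \int_0^{1/2}(1-t)^s\,dt = \frac{2^{s+1}-1}{2^{s+1}(s+1)},
\end{equation*}
with the roles of $t^s$ and $(1-t)^s$ swapped on $[\tfrac12,1]$. Raising to the $1/q$ and adding the two contributions yields exactly the bracket $\{[\cdots]^{1/q}+[\cdots]^{1/q}\}$ in the statement.

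The only genuinely delicate step is the Beta-function reduction for $\int_0^{1/2}t^{2p}(\tfrac12-t)^p\,dt$; the rest is careful bookkeeping of powers of $2$. I expect the main obstacle to be tracking the constant $(\tfrac12)^{1/p}\cdot\tfrac{1}{48}$ correctly when consolidating $\tfrac{1}{6}$, $\tfrac{1}{2^{3+1/p}}$, and the two summed contributions, and verifying that the $I_2$-integral indeed reduces to the same Beta integral after the symmetric substitution $u=1-t$.
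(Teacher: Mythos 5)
Your proposal is correct and follows essentially the same route as the paper's own proof: Lemma \ref{lem 2.1}, H\"{o}lder's inequality on each half-interval, the Beta-function evaluation $\int_{0}^{1/2}t^{2p}(\tfrac12-t)^{p}\,dt=2^{-(3p+1)}\Gamma(2p+1)\Gamma(p+1)/\Gamma(3p+2)$ (the paper writes this as $8^{-p}\Gamma(2p+1)\Gamma(p+1)/(2\Gamma(3p+2))$), and the elementary integrals $\int_{0}^{1/2}t^{s}\,dt$ and $\int_{0}^{1/2}(1-t)^{s}\,dt$ with roles swapped on $[\tfrac12,1]$. All constants check out, so nothing further is needed.
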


\begin{proof}
From Lemma \ref{lem 2.1}, using the $s-$convexity of $\left\vert f^{\prime
\prime \prime \text{ }}\right\vert ^{q}$ and the well-known H\"{o}lder's
inequality we have%
\begin{eqnarray*}
&&\left\vert \int_{a}^{b}f(x)dx-\frac{b-a}{6}\left[ f(a)+4f\left( \frac{a+b}{%
2}\right) +f(b)\right] \right\vert \\
&\leq &\frac{\left( b-a\right) ^{4}}{6}\left\{ \left( \int_{0}^{\frac{1}{2}%
}\left( t^{2}\left( \frac{1}{2}-t\right) \right) ^{p}dt\right) ^{\frac{1}{p}%
}\left( \int_{0}^{\frac{1}{2}}\left\vert f^{\prime \prime \prime
}(ta+(1-t)b)\right\vert ^{q}dt\right) ^{\frac{1}{q}}\right. \\
&&\left. +\left( \int_{\frac{1}{2}}^{1}\left( (t-1)^{2}\left( t-\frac{1}{2}%
\right) \right) ^{p}dt\right) ^{\frac{1}{p}}\left( \int_{\frac{1}{2}%
}^{1}\left\vert f^{\prime \prime \prime }(ta+(1-t)b)\right\vert
^{q}dt\right) ^{\frac{1}{q}}\right\} \\
&\leq &\frac{\left( b-a\right) ^{4}}{6}\left( \frac{\Gamma (2p+1)\Gamma (p+1)%
}{2^{3p+1}\Gamma (3p+2)}\right) ^{\frac{1}{p}} \\
&&\times \left\{ \left( \int_{0}^{\frac{1}{2}}\left[ t^{s}\left\vert
f^{\prime \prime \prime \text{ }}(a)\right\vert ^{q}+(1-t)^{s}\left\vert
f^{\prime \prime \prime \text{ }}(b)\right\vert ^{q}\right] dt\right) ^{%
\frac{1}{q}}\right. \\
&&\left. +\left( \int_{\frac{1}{2}}^{1}\left[ t^{s}\left\vert f^{\prime
\prime \prime \text{ }}(a)\right\vert ^{q}+(1-t)^{s}\left\vert f^{\prime
\prime \prime \text{ }}(b)\right\vert ^{q}\right] dt\right) ^{\frac{1}{q}%
}\right\} \\
&\leq &\frac{\left( b-a\right) ^{4}}{48}\left( \frac{1}{2}\right) ^{\frac{1}{%
p}}\left( \frac{\Gamma (2p+1)\Gamma (p+1)}{\Gamma (3p+2)}\right) ^{\frac{1}{p%
}} \\
&&\times \left\{ \left[ \frac{1}{2^{s+1}(s+1)}\left\vert f^{\prime \prime
\prime \text{ }}(a)\right\vert ^{q}+\frac{2^{s+1}-1}{2^{s+1}(s+1)}\left\vert
f^{\prime \prime \prime \text{ }}(b)\right\vert ^{q}\right] ^{\frac{1}{q}%
}\right. \\
&&\left. +\left[ \frac{2^{s+1}-1}{2^{s+1}(s+1)}\left\vert f^{\prime \prime
\prime \text{ }}(a)\right\vert ^{q}+\frac{1}{2^{s+1}(s+1)}\left\vert
f^{\prime \prime \prime \text{ }}(b)\right\vert ^{q}\right] ^{\frac{1}{q}%
}\right\}
\end{eqnarray*}%
where 
\begin{equation*}
\int_{0}^{\frac{1}{2}}\left( t^{2}\left( \frac{1}{2}-t\right) \right)
^{p}dt=\int_{\frac{1}{2}}^{1}\left( (t-1)^{2}\left( t-\frac{1}{2}\right)
\right) ^{p}dt=\frac{8^{-p}\Gamma (2p+1)\Gamma (p+1)}{2\Gamma (3p+2)}
\end{equation*}%
and $\Gamma $ is the Gamma function.
\end{proof}

\begin{corollary}
\label{co 2.1} If we choose $s=1$ in Theorem \ref{teo 2.2}, we have%
\begin{eqnarray*}
&&\left\vert \int_{a}^{b}f(x)dx-\frac{b-a}{6}\left[ f(a)+4f\left( \frac{a+b}{%
2}\right) +f(b)\right] \right\vert \\
&\leq &\frac{\left( b-a\right) ^{4}}{96}\left( \frac{1}{4}\right) ^{\frac{1}{%
q}}\left( \frac{\Gamma (2p+1)\Gamma (p+1)}{\Gamma (3p+2)}\right) ^{\frac{1}{p%
}} \\
&&\times \left\{ \left( \left\vert f^{\prime \prime \prime \text{ }%
}(a)\right\vert ^{q}+3\left\vert f^{\prime \prime \prime \text{ }%
}(b)\right\vert ^{q}\right) ^{\frac{1}{q}}+\left( 3\left\vert f^{\prime
\prime \prime \text{ }}(a)\right\vert ^{q}+\left\vert f^{\prime \prime
\prime \text{ }}(b)\right\vert ^{q}\right) ^{\frac{1}{q}}\right\} .
\end{eqnarray*}
\end{corollary}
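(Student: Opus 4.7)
The plan is essentially direct substitution: Corollary \ref{co 2.1} is the specialization $s=1$ of Theorem \ref{teo 2.2}, so no new technique is required and the entire argument is algebraic simplification of the constants.

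First I would compute the bracketed weights that arise when $s=1$. With this value one has $2^{s+1}(s+1)=8$ and $2^{s+1}-1=3$, so the two bracketed expressions in Theorem \ref{teo 2.2} become $\frac{1}{8}\left|f'''(a)\right|^{q}+\frac{3}{8}\left|f'''(b)\right|^{q}$ and $\frac{3}{8}\left|f'''(a)\right|^{q}+\frac{1}{8}\left|f'''(b)\right|^{q}$. Pulling the common factor $\left(\frac{1}{8}\right)^{1/q}$ out of each $(\cdot)^{1/q}$ term produces the sum $\left(\left|f'''(a)\right|^{q}+3\left|f'''(b)\right|^{q}\right)^{1/q}+\left(3\left|f'''(a)\right|^{q}+\left|f'''(b)\right|^{q}\right)^{1/q}$ exactly as it appears in the statement.

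Next I would consolidate the remaining constants. Outside the bracket we have $\frac{(b-a)^{4}}{48}$, the factor $\left(\frac{1}{2}\right)^{1/p}$, the Gamma ratio $\left(\frac{\Gamma(2p+1)\Gamma(p+1)}{\Gamma(3p+2)}\right)^{1/p}$, and the $\left(\frac{1}{8}\right)^{1/q}$ just extracted. Using the conjugate relation $\frac{1}{p}+\frac{1}{q}=1$, I would rewrite
\[
\left(\frac{1}{2}\right)^{1/p}\left(\frac{1}{8}\right)^{1/q}=2^{-1/p-3/q}=2^{-(1/p+1/q)-2/q}=\frac{1}{2}\left(\frac{1}{4}\right)^{1/q}.
\]
Merging the $\frac{1}{2}$ with $\frac{1}{48}$ yields the overall constant $\frac{1}{96}$ that appears in the corollary, while the Gamma ratio is untouched.

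There is no genuine obstacle in this proof; the only point requiring any care is the conjugate-exponent manipulation in the step above, which is what converts the $\left(\frac{1}{2}\right)^{1/p}$ factor of Theorem \ref{teo 2.2} into the $\left(\frac{1}{4}\right)^{1/q}$ factor of Corollary \ref{co 2.1}. Everything else is a direct substitution of $s=1$.
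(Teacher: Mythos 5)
Your computation is correct and is exactly the intended argument: the paper gives no separate proof of Corollary \ref{co 2.1}, treating it as the immediate specialization $s=1$ of Theorem \ref{teo 2.2}, and your extraction of the factor $\left(\tfrac{1}{8}\right)^{1/q}$ together with the identity $\left(\tfrac{1}{2}\right)^{1/p}\left(\tfrac{1}{8}\right)^{1/q}=\tfrac{1}{2}\left(\tfrac{1}{4}\right)^{1/q}$ via $\tfrac{1}{p}+\tfrac{1}{q}=1$ accounts precisely for the constants $\tfrac{1}{96}$ and $\left(\tfrac{1}{4}\right)^{1/q}$ in the statement.
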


\begin{theorem}
\label{teo 2.3} Suppose that all the assumptions of Theorem \ref{teo 2.2}
are satisfied. Then%
\begin{eqnarray*}
&&\left\vert \int_{a}^{b}f(x)dx-\frac{b-a}{6}\left[ f(a)+4f\left( \frac{a+b}{%
2}\right) +f(b)\right] \right\vert \\
&\leq &\frac{\left( b-a\right) ^{4}}{6}\left( \frac{1}{192}\right) ^{1-\frac{%
1}{q}} \\
&&\times \left\{ \left( \frac{2^{-4-s}}{(3+s)(4+s)}\left\vert f^{\prime
\prime \prime \text{ }}(a)\right\vert ^{q}+\frac{2^{-4-s}\left(
34+2^{4+s}(-2+s)+11s+s^{2}\right) }{(1+s)(2+s)(3+s)(4+s)}\left\vert
f^{\prime \prime \prime \text{ }}(b)\right\vert ^{q}\right) ^{\frac{1}{q}%
}\right. \\
&&\left. +\left( \frac{2^{-4-s}\left( 34+2^{4+s}(-2+s)+11s+s^{2}\right) }{%
(1+s)(2+s)(3+s)(4+s)}\left\vert f^{\prime \prime \prime \text{ }%
}(a)\right\vert ^{q}+\frac{2^{-4-s}}{(3+s)(4+s)}\left\vert f^{\prime \prime
\prime \text{ }}(b)\right\vert ^{q}\right) ^{\frac{1}{q}}\right\} .
\end{eqnarray*}
\end{theorem}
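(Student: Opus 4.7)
The plan is to return to the identity of Lemma~\ref{lem 2.1} and proceed exactly as in Theorem~\ref{teo 2.2}, except that on each of the two subintervals $[0,\tfrac12]$ and $[\tfrac12,1]$ I would apply the power--mean inequality
\[
\int_{I}|g(t)|\,|h(t)|\,dt \le \Bigl(\int_{I}|g(t)|\,dt\Bigr)^{1-\frac{1}{q}}\Bigl(\int_{I}|g(t)|\,|h(t)|^{q}\,dt\Bigr)^{\frac{1}{q}}
\]
in place of H\"older's inequality, with $g$ equal to the absolute value of the kernel $p$ (absorbing the factor $1/6$ into the constant in front) and $h(t)=f'''(ta+(1-t)b)$. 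The overall prefactor $(b-a)^{4}/6$ pulls out of the whole estimate just as in the previous theorems.

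For the ``linear'' factor I would use the total mass of $|p|$ on each half,
\[
\int_{0}^{\frac{1}{2}}t^{2}\Bigl(\frac{1}{2}-t\Bigr)dt=\int_{\frac{1}{2}}^{1}(1-t)^{2}\Bigl(t-\frac{1}{2}\Bigr)dt=\frac{1}{192},
\]
which is the source of the prefactor $(1/192)^{1-1/q}$ that appears in the conclusion. For the ``$q$-th power'' factor I would apply $s$-convexity of $|f'''|^{q}$ in the second sense,
\[
|f'''(ta+(1-t)b)|^{q} \le t^{s}|f'''(a)|^{q}+(1-t)^{s}|f'''(b)|^{q},
\]
and distribute. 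The resulting weighted integrals
\[
\int_{0}^{\frac{1}{2}}t^{2+s}\Bigl(\frac{1}{2}-t\Bigr)dt,\qquad \int_{0}^{\frac{1}{2}}t^{2}\Bigl(\frac{1}{2}-t\Bigr)(1-t)^{s}dt,
\]
together with their mirror images on $[\tfrac12,1]$, are precisely the four moments already evaluated inside the proof of Theorem~\ref{teo 2.1}, giving $\dfrac{2^{-4-s}}{(3+s)(4+s)}$ and $\dfrac{2^{-4-s}(34+2^{4+s}(-2+s)+11s+s^{2})}{(1+s)(2+s)(3+s)(4+s)}$ respectively.

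The main obstacle is purely bookkeeping: I must pair each of these four moments with the correct $|f'''(a)|^{q}$ or $|f'''(b)|^{q}$ on the correct subinterval, so that the two bracketed expressions in the conclusion come out as stated, with the ``small'' coefficient $2^{-4-s}/((3+s)(4+s))$ attached to $|f'''(a)|^{q}$ on $[0,\tfrac12]$ (and to $|f'''(b)|^{q}$ on $[\tfrac12,1]$) and the ``large'' coefficient paired in the opposite way. No genuinely new integral arises, so once the power--mean step is substituted for H\"older the estimate reduces to the calculations already carried out for Theorems~\ref{teo 2.1} and~\ref{teo 2.2}.
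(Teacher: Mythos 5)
Your proposal is correct and follows essentially the same route as the paper's own proof: the power--mean inequality applied on each half of $[0,1]$ with the kernel mass $\int_{0}^{1/2}t^{2}\bigl(\tfrac{1}{2}-t\bigr)dt=\tfrac{1}{192}$ supplying the factor $(1/192)^{1-1/q}$, followed by $s$-convexity of $|f'''|^{q}$ and the same four moment integrals already computed in Theorem~\ref{teo 2.1}. Your description of how the ``small'' and ``large'' coefficients pair with $|f'''(a)|^{q}$ and $|f'''(b)|^{q}$ on each subinterval also matches the stated conclusion.
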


\begin{proof}
From Lemma \ref{lem 2.1} and using the well-known power-mean inequality we
have%
\begin{eqnarray*}
&&\left\vert \int_{a}^{b}f(x)dx-\frac{b-a}{6}\left[ f(a)+4f\left( \frac{a+b}{%
2}\right) +f(b)\right] \right\vert \\
&\leq &\frac{\left( b-a\right) ^{4}}{6}\left\{ \left( \int_{0}^{\frac{1}{2}%
}t^{2}\left( \frac{1}{2}-t\right) dt\right) ^{1-\frac{1}{q}}\left( \int_{0}^{%
\frac{1}{2}}t^{2}\left( \frac{1}{2}-t\right) \left\vert f^{\prime \prime
\prime }(ta+(1-t)b)\right\vert ^{q}dt\right) ^{\frac{1}{q}}\right. \\
&&\left. +\left( \int_{\frac{1}{2}}^{1}(t-1)^{2}\left( t-\frac{1}{2}\right)
dt\right) ^{1-\frac{1}{q}}\left( \int_{\frac{1}{2}}^{1}(t-1)^{2}\left( t-%
\frac{1}{2}\right) \left\vert f^{\prime \prime \prime
}(ta+(1-t)b)\right\vert ^{q}dt\right) ^{\frac{1}{q}}\right\} .
\end{eqnarray*}%
Since $\left\vert f^{\prime \prime \prime \text{ }}\right\vert ^{q}$ is $s-$%
convex, we have 
\begin{eqnarray*}
&&\int_{0}^{\frac{1}{2}}t^{2}\left( \frac{1}{2}-t\right) \left\vert
f^{\prime \prime \prime }(ta+(1-t)b)\right\vert ^{q}dt \\
&\leq &\int_{0}^{\frac{1}{2}}t^{2}\left( \frac{1}{2}-t\right) \left(
t^{s}\left\vert f^{\prime \prime \prime \text{ }}(a)\right\vert
+(1-t)^{s}\left\vert f^{\prime \prime \prime \text{ }}(b)\right\vert \right)
dt \\
&=&\frac{2^{-4-s}}{(3+s)(4+s)}\left\vert f^{\prime \prime \prime \text{ }%
}(a)\right\vert ^{q}+\frac{2^{-4-s}\left( 34+2^{4+s}(-2+s)+11s+s^{2}\right) 
}{(1+s)(2+s)(3+s)(4+s)}\left\vert f^{\prime \prime \prime \text{ }%
}(b)\right\vert ^{q}
\end{eqnarray*}%
and 
\begin{eqnarray*}
&&\int_{\frac{1}{2}}^{1}(t-1)^{2}\left( t-\frac{1}{2}\right) \left\vert
f^{\prime \prime \prime }(ta+(1-t)b)\right\vert ^{q}dt \\
&\leq &\int_{\frac{1}{2}}^{1}(t-1)^{2}\left( t-\frac{1}{2}\right) \left(
t^{s}\left\vert f^{\prime \prime \prime \text{ }}(a)\right\vert
+(1-t)^{s}\left\vert f^{\prime \prime \prime \text{ }}(b)\right\vert \right)
dt \\
&=&\frac{2^{-4-s}\left( 34+2^{4+s}(-2+s)+11s+s^{2}\right) }{%
(1+s)(2+s)(3+s)(4+s)}\left\vert f^{\prime \prime \prime \text{ }%
}(a)\right\vert ^{q}+\frac{2^{-4-s}}{(3+s)(4+s)}\left\vert f^{\prime \prime
\prime \text{ }}(b)\right\vert ^{q}.
\end{eqnarray*}%
Therefore we have 
\begin{eqnarray*}
&&\left\vert \int_{a}^{b}f(x)dx-\frac{b-a}{6}\left[ f(a)+4f\left( \frac{a+b}{%
2}\right) +f(b)\right] \right\vert \\
&\leq &\frac{\left( b-a\right) ^{4}}{6}\left( \frac{1}{192}\right) ^{1-\frac{%
1}{q}} \\
&&\times \left\{ \left( \frac{2^{-4-s}}{(3+s)(4+s)}\left\vert f^{\prime
\prime \prime \text{ }}(a)\right\vert ^{q}+\frac{2^{-4-s}\left(
34+2^{4+s}(-2+s)+11s+s^{2}\right) }{(1+s)(2+s)(3+s)(4+s)}\left\vert
f^{\prime \prime \prime \text{ }}(b)\right\vert ^{q}\right) ^{\frac{1}{q}%
}\right. \\
&&\left. +\left( \frac{2^{-4-s}\left( 34+2^{4+s}(-2+s)+11s+s^{2}\right) }{%
(1+s)(2+s)(3+s)(4+s)}\left\vert f^{\prime \prime \prime \text{ }%
}(a)\right\vert ^{q}+\frac{2^{-4-s}}{(3+s)(4+s)}\left\vert f^{\prime \prime
\prime \text{ }}(b)\right\vert ^{q}\right) ^{\frac{1}{q}}\right\} ,
\end{eqnarray*}%
which is the required result.
\end{proof}

\begin{corollary}
\label{co 2.2} If we choose $s=1$ in Theorem \ref{teo 2.3}, we have%
\begin{eqnarray*}
&&\left\vert \int_{a}^{b}f(x)dx-\frac{b-a}{6}\left[ f(a)+4f\left( \frac{a+b}{%
2}\right) +f(b)\right] \right\vert \\
&\leq &\frac{\left( b-a\right) ^{4}}{1152}\left\{ \left( \frac{3\left\vert
f^{\prime \prime \prime \text{ }}(a)\right\vert ^{q}+7\left\vert f^{\prime
\prime \prime \text{ }}(b)\right\vert ^{q}}{10}\right) ^{\frac{1}{q}}+\left( 
\frac{7\left\vert f^{\prime \prime \prime \text{ }}(a)\right\vert
^{q}+3\left\vert f^{\prime \prime \prime \text{ }}(b)\right\vert ^{q}}{10}%
\right) ^{\frac{1}{q}}\right\} .
\end{eqnarray*}%
The following result holds for $s-$concave functions.
\end{corollary}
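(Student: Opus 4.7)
The plan is to specialize Theorem \ref{teo 2.3} to $s=1$ and carry out the resulting constant simplifications. The structural skeleton of that bound --- namely the prefactor $\frac{(b-a)^{4}}{6}(1/192)^{1-1/q}$ together with the two symmetric terms involving $|f'''(a)|^{q}$ and $|f'''(b)|^{q}$ --- is preserved under the substitution, so the whole task reduces to evaluating the $s$-dependent coefficients and repackaging the outer constants into the form stated in the corollary.

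First I would evaluate each of the two $s$-dependent coefficients at $s=1$. Since $2^{-4-s}|_{s=1}=1/32$, $(3+s)(4+s)|_{s=1}=20$, and $(1+s)(2+s)(3+s)(4+s)|_{s=1}=120$, the coefficient $\frac{2^{-4-s}}{(3+s)(4+s)}$ becomes $\frac{1}{640}=\frac{3}{1920}$. For the second coefficient I would compute the polynomial $34+2^{4+s}(-2+s)+11s+s^{2}$ at $s=1$, which yields $34-32+11+1=14$, so that $\frac{2^{-4-s}\,(34+2^{4+s}(-2+s)+11s+s^{2})}{(1+s)(2+s)(3+s)(4+s)}$ becomes $\frac{14}{32\cdot 120}=\frac{7}{1920}$. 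Consequently each of the two brackets appearing under the $1/q$-power in Theorem \ref{teo 2.3} simplifies, after factoring out $\frac{1}{1920}$, to $\left(\tfrac{1}{1920}\right)^{1/q}(3|f'''(a)|^{q}+7|f'''(b)|^{q})^{1/q}$ and $\left(\tfrac{1}{1920}\right)^{1/q}(7|f'''(a)|^{q}+3|f'''(b)|^{q})^{1/q}$ respectively.

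Next I would recombine the outer scalars. Writing $\frac{1}{1920}=\frac{1}{192}\cdot\frac{1}{10}$, the factor $\left(\frac{1}{1920}\right)^{1/q}$ splits into $(1/192)^{1/q}(1/10)^{1/q}$, and the $(1/192)^{1/q}$ piece multiplies the pre-existing $(1/192)^{1-1/q}$ to yield $1/192$. Together with the $1/6$ in front, this produces the uniform prefactor $\frac{(b-a)^{4}}{6\cdot 192}=\frac{(b-a)^{4}}{1152}$. Finally I would absorb the remaining $(1/10)^{1/q}$ into each bracket to present them in the normalized form $\left(\tfrac{3|f'''(a)|^{q}+7|f'''(b)|^{q}}{10}\right)^{1/q}$ and $\left(\tfrac{7|f'''(a)|^{q}+3|f'''(b)|^{q}}{10}\right)^{1/q}$, which is precisely the claimed inequality.

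There is no genuine conceptual obstacle, because the corollary is a direct substitution in Theorem \ref{teo 2.3}; the only place where a mistake is realistic is the numerical evaluation of $34+2^{4+s}(-2+s)+11s+s^{2}$ at $s=1$ and the bookkeeping that merges $(1/192)^{1-1/q}$ with $(1/1920)^{1/q}$ into $\frac{1}{1152}(1/10)^{1/q}$. I would therefore verify these two steps explicitly and leave the rest as immediate algebraic simplification.
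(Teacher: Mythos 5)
Your computation is correct and matches the paper's (implicit) derivation exactly: the corollary is a direct substitution of $s=1$ into Theorem \ref{teo 2.3}, and your evaluations $34+2^{5}(-1)+11+1=14$, $\frac{1}{640}=\frac{3}{1920}$, $\frac{14}{3840}=\frac{7}{1920}$, and the merge of $(1/192)^{1-1/q}$ with $(1/1920)^{1/q}$ into $\frac{1}{192}\left(\frac{1}{10}\right)^{1/q}$ all check out. Nothing further is needed.
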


\begin{theorem}
\label{teo 2.4} Let $f:I\subset \lbrack 0,\infty )\rightarrow 
\mathbb{R}
$ be a differentiable function on $I^{\circ }$ such that $f^{\prime \prime
\prime \text{ }}\in L[a,b],$ where $a,b\in I^{\circ }$ with $a<b.$ If $%
\left\vert f^{\prime \prime \prime \text{ }}\right\vert ^{q}$ is $s-$concave
on $[a,b]$ for $q>1$ with $\frac{1}{p}+\frac{1}{q}=1,$ then the following
inequality holds:%
\begin{eqnarray*}
&&\left\vert \int_{a}^{b}f(x)dx-\frac{b-a}{6}\left[ f(a)+4f\left( \frac{a+b}{%
2}\right) +f(b)\right] \right\vert \\
&\leq &\frac{\left( b-a\right) ^{4}}{48}\left( \frac{1}{2}\right) ^{\frac{1}{%
p}}\left( 2^{\frac{s-2}{q}}\right) \left( \frac{\Gamma (2p+1)\Gamma (p+1)}{%
\Gamma (3p+2)}\right) ^{\frac{1}{p}}\left\{ \left\vert f^{\prime \prime
\prime \text{ }}\left( \frac{a+3b}{4}\right) \right\vert +\left\vert
f^{\prime \prime \prime \text{ }}\left( \frac{3a+b}{4}\right) \right\vert
\right\} .
\end{eqnarray*}

\begin{proof}
From Lemma \ref{lem 2.1} and using the H\"{o}lder's inequality, we have%
\begin{eqnarray}
&&\left\vert \int_{a}^{b}f(x)dx-\frac{b-a}{6}\left[ f(a)+4f\left( \frac{a+b}{%
2}\right) +f(b)\right] \right\vert  \label{2.1} \\
&\leq &\frac{\left( b-a\right) ^{4}}{6}\left\{ \left( \int_{0}^{\frac{1}{2}%
}\left( t^{2}\left( \frac{1}{2}-t\right) \right) ^{p}dt\right) ^{\frac{1}{p}%
}\left( \int_{0}^{\frac{1}{2}}\left\vert f^{\prime \prime \prime
}(ta+(1-t)b)\right\vert ^{q}dt\right) ^{\frac{1}{q}}\right.  \notag \\
&&\left. +\left( \int_{\frac{1}{2}}^{1}\left( (t-1)^{2}\left( t-\frac{1}{2}%
\right) \right) ^{p}dt\right) ^{\frac{1}{p}}\left( \int_{\frac{1}{2}%
}^{1}\left\vert f^{\prime \prime \prime }(ta+(1-t)b)\right\vert
^{q}dt\right) ^{\frac{1}{q}}\right\} .  \notag
\end{eqnarray}%
Since $\left\vert f^{\prime \prime \prime \text{ }}\right\vert ^{q}$ is $s-$%
concave, using the inequality (\ref{1.2}), we have%
\begin{equation}
\int_{0}^{\frac{1}{2}}\left\vert f^{\prime \prime \prime
}(ta+(1-t)b)\right\vert ^{q}dt\leq 2^{s-2}\left\vert f^{\prime \prime \prime 
\text{ }}\left( \frac{a+3b}{4}\right) \right\vert ^{q}  \label{2.2}
\end{equation}%
and%
\begin{equation}
\int_{\frac{1}{2}}^{1}\left\vert f^{\prime \prime \prime
}(ta+(1-t)b)\right\vert ^{q}dt\leq 2^{s-2}\left\vert f^{\prime \prime \prime 
\text{ }}\left( \frac{3a+b}{4}\right) \right\vert ^{q}.  \label{2.3}
\end{equation}%
From (\ref{2.1})-(\ref{2.3}), we get%
\begin{eqnarray*}
&&\left\vert \int_{a}^{b}f(x)dx-\frac{b-a}{6}\left[ f(a)+4f\left( \frac{a+b}{%
2}\right) +f(b)\right] \right\vert \\
&\leq &\frac{\left( b-a\right) ^{4}}{48}\left( \frac{1}{2}\right) ^{\frac{1}{%
p}}\left( \frac{\Gamma (2p+1)\Gamma (p+1)}{\Gamma (3p+2)}\right) ^{\frac{1}{p%
}}2^{\frac{s-2}{q}}\left\{ \left\vert f^{\prime \prime \prime \text{ }%
}\left( \frac{a+3b}{4}\right) \right\vert +\left\vert f^{\prime \prime
\prime \text{ }}\left( \frac{3a+b}{4}\right) \right\vert \right\}
\end{eqnarray*}%
which completes the proof.
\end{proof}
\end{theorem}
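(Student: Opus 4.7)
The plan is to run exactly the same H\"older-with-Lemma \ref{lem 2.1} machinery used in the proof of Theorem \ref{teo 2.2}, and only to replace the final convex majorization of the $L^q$-integrals of $|f'''(ta+(1-t)b)|$ by a sharper estimate obtained from the $s$-concave analogue of the left inequality in (\ref{1.2}).

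First I would apply Lemma \ref{lem 2.1} and split the $t$-integral at $t=1/2$, then apply H\"older's inequality with exponents $p,q$ on each half, obtaining
\[
\int_0^{1/2}\!t^2\bigl(\tfrac12-t\bigr)|f'''(ta+(1-t)b)|\,dt \le \Bigl(\int_0^{1/2}\!(t^2(\tfrac12-t))^p\,dt\Bigr)^{1/p}\!\Bigl(\int_0^{1/2}\!|f'''(ta+(1-t)b)|^q\,dt\Bigr)^{1/q}
\]
together with the symmetric estimate on $[1/2,1]$. The $p$-integrals were already computed by a Beta-function calculation in the proof of Theorem \ref{teo 2.2}; each equals $\Gamma(2p+1)\Gamma(p+1)/\bigl(2^{3p+1}\Gamma(3p+2)\bigr)$, so those factors can simply be quoted.

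For the $q$-integrals I would perform the change of variable $u=ta+(1-t)b$, under which $t\in[0,1/2]$ is mapped bijectively onto $u\in[(a+b)/2,b]$ with $dt=du/(b-a)$, yielding
\[
\int_0^{1/2}|f'''(ta+(1-t)b)|^q\,dt=\frac{1}{b-a}\int_{(a+b)/2}^{b}|f'''(u)|^q\,du.
\]
Since $|f'''|^q$ is $s$-concave on $[a,b]$, the left inequality in (\ref{1.2}) reverses for concave functions; applying it on the sub-interval $[(a+b)/2,b]$ (whose length is $(b-a)/2$ and whose midpoint is $(a+3b)/4$) gives $\tfrac{2}{b-a}\int_{(a+b)/2}^{b}|f'''(u)|^q\,du\le 2^{s-1}\bigl|f'''((a+3b)/4)\bigr|^q$, i.e.\ exactly (\ref{2.2}). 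The mirror substitution on $[1/2,1]$ yields (\ref{2.3}) in the same way.

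Substituting these two bounds back into the H\"older estimate and collecting the constants via $2^{-(3p+1)/p}=2^{-3}\cdot 2^{-1/p}$, the prefactor $\tfrac{(b-a)^4}{6}$ absorbs into $\tfrac{(b-a)^4}{48}(1/2)^{1/p}$, which recovers the announced inequality. The only part that is not completely routine is being careful that the inequality in Theorem \ref{teo 1.0} is used in its reversed (concave) form and that the orientation flip in $u=ta+(1-t)b$ is tracked correctly so that the midpoints $(a+3b)/4$ and $(3a+b)/4$ appear in the right order on the two halves; after that point the rest is bookkeeping.
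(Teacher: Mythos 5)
Your proposal is correct and follows essentially the same route as the paper's own proof: Lemma \ref{lem 2.1}, H\"older on each half with the same Beta-function evaluation of the $p$-integrals, and the reversed left-hand Hermite--Hadamard inequality from (\ref{1.2}) applied to the $s$-concave function $|f'''|^q$ on the two half-intervals to produce (\ref{2.2}) and (\ref{2.3}). The only difference is that you make explicit the change of variable and the identification of the midpoints $(a+3b)/4$ and $(3a+b)/4$, which the paper leaves implicit.
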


\end{document}